\theoremstyle{plain}
\newtheorem{thm}{Theorem}[section]
\newtheorem{lemm}[thm]{Lemma}
\newtheorem{cor}[thm]{Corollary}
\theoremstyle{definition}
\newtheorem{defn}[thm]{Definition}
\newtheorem{exmp}[thm]{Example}
\newtheorem{ques}[thm]{Question}
\def\Z{{\mathbb Z}}
\def\cd{\protect\operatorname{cd}}
\def\cat{\protect\operatorname{cat}}
\title{On the LS-category of homomorphism of groups with torsion}
\author{Nursultan Kuanyshov}
\address{Nursultan Kuanyshov, Department of Mathematics, University of Florida, 358 Little Hall, Gainesville, FL 32611-8105, USA; Institute of Mathematics and Mathematical Modeling
125 Pushkin str., 050010 Almaty, Kazakhstan}
\email{nkuanyshov@ufl.edu, kuanyshov.nursultan@gmail.com}
\subjclass[2020]{Primary 55M30; Secondary 20J06, 20K30, 20K10, 55M25, 57R65, 57R67}
\keywords{Lusternik-Schnirelmann category, Cohomological dimension, group homomorphism}
\begin{document}
\maketitle
\begin{abstract}
We prove the equality $\cat(\phi)=\cd(\phi)$ for homomorphisms $\phi:\Gamma\to \Lambda$ between finitely generated abelian groups $\Gamma$ and $\Lambda$, where $\phi(T(\Gamma))=0$ for the torsion subgroups $T(\Gamma)$ of $\Gamma$.   
\end{abstract}

\section{Introduction}

We recall that the (reduced) Lusternik–Schnirelmann category $\cat(\phi)$ of a group homomorphism
$\phi:\Gamma\to\Lambda$ is defined as the minimum number $ k$ for which there exists an open cover
of $B\Gamma$ by $k +1$ subsets $U_0, \dots ,U_k$ such that the restriction $\bar\phi|_{U_i}$ is nullhomotopic
for all $i$ where $\bar\phi:B\Gamma\to B\Lambda$ is a map that induces $\phi$ on the fundamental group \cite{LS, CLOT}.

The cohomological dimension $\cd(\phi)$ of the group homomorphism $\phi$ is defined as
the supremum of $k$ for which there exists a $\Lambda$-module $M$ such that the induced
map on cohomology
$\phi^* : H^k(\Lambda,M) \to  H^k(\Gamma, M)$
is non-trivial \cite{Gr, Br}.

The two invariants are related by the inequality $\cd(\phi)\leq \cat(\phi)$ (see \cite{DK}). Note that for the identity homomorphism on $\Gamma$, we recover the classical invariants $\cat(\Gamma)$ and $\cd(\Gamma)$ of the group $\Gamma$. In this sense, the definitions above are generalisations to the relative setting of group homomorphisms. The paper addresses relative versions of two classical results:

First, it is a famous result of Eilenberg–Ganea \cite{EG} from the 1950’s that the equality $\cat(\Gamma)=\cd(\Gamma)$ holds for all groups $\Gamma$. The following natural question was asked by Jamie Scott \cite{Sc} in 2020.

\begin{ques}\label{question}
For which group homomorphisms $\phi:\Gamma\to\Lambda$ does the equality $$\cat(\phi)=\cd(\phi) $$
hold?
\end{ques}

It is known that not all homomorphisms satisfy Question \ref{question} (see \cite{Gr,DK,DD}). On the other hand, Jamie Scott ~\cite{Sc} proved Question \ref{question} holds for monomorphisms of any groups and for homomorphisms of free and free abelian groups. 
In the joint paper with Dranishnikov~\cite{DK}, we reduced the Question \ref{question} from arbitrary homomorphisms to epimorphisms and proved the Question \ref{question} for epimorphisms between finitely generated, torsion-free nilpotent groups. Recently, the author proved the Question \ref{question} holds for epimorphisms of almost nilpotent groups \cite{Ku1} and for epimorphisms of surface groups, which are the fundamental group of orientable, closed surfaces \cite{Ku2}.

Let $T(\Gamma)$ denote the torsion subgroups  of an abelian group $\Gamma$.
The main result of our paper is the following.
\begin{thm}\label{main}
For any homomorphism $\phi:\Gamma\to\Lambda$  
between finitely generated abelian groups satisfying the condition $\phi(T(\Gamma))=0$,  $$\cat(\phi)=\cd(\phi).$$
\end{thm}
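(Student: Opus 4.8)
The plan is to keep the standing inequality $\cd(\phi)\le\cat(\phi)$ of \cite{DK} and to prove the reverse bound by showing that both invariants equal the minimal number $\mu$ of generators of the image $\phi(\Gamma)$. The entire strategy rests on the hypothesis $\phi(T(\Gamma))=0$, which renders the torsion of the source invisible to $\phi$ and lets me replace $\Gamma$ by its free quotient $\Gamma/T(\Gamma)\cong\Z^{n}$.

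First I would carry out three reductions. By the reduction of \cite{DK} from arbitrary homomorphisms to epimorphisms (which invokes Scott's solution of the monomorphism case \cite{Sc}), I may assume $\phi:\Gamma\twoheadrightarrow\Lambda$ is onto; corestricting to the image keeps $\phi(T(\Gamma))=0$. Writing $\Gamma=\Z^{n}\oplus T(\Gamma)$, let $q:\Gamma\to\Z^{n}$ be the projection and $j:\Z^{n}\hookrightarrow\Gamma$ the inclusion, so $qj=\mathrm{id}$. Since $\phi$ kills $T(\Gamma)$ it factors as $\phi=\psi\circ q$ with $\psi:=\phi\circ j:\Z^{n}\to\Lambda$, and $\psi$ is again onto. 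On classifying spaces $\bar\phi\simeq\bar\psi\circ\bar q$ and $\bar\psi\simeq\bar\phi\circ\bar j$, so the elementary inequality $\cat(g\circ h)\le\cat(g)$, applied both ways, gives $\cat(\phi)=\cat(\psi)$; the factorizations $\phi^{*}=q^{*}\psi^{*}$ and $\psi^{*}=j^{*}\phi^{*}$ on cohomology (the coefficient module structures agree because $\psi q=\phi$ and $\phi j=\psi$) give $\cd(\phi)=\cd(\psi)$. Thus everything reduces to a surjection $\psi:\Z^{n}\twoheadrightarrow\Lambda$. Finally, Smith normal form supplies bases in which $\psi\cong\rho\oplus 0\oplus\mathrm{id}_{\Z^{m}}$, where $\Lambda\cong(\Z/d_{1}\oplus\cdots\oplus\Z/d_{r})\oplus\Z^{m}$ with invariant factors $2\le d_{1}\mid d_{2}\mid\cdots\mid d_{r}$, the map $\rho:\Z^{r}\to\Z/d_{1}\oplus\cdots\oplus\Z/d_{r}$ is coordinatewise reduction, and $m+r=\mu$.

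For the upper bound I would use subadditivity of the category of a map under direct sums, $\cat(\phi_{1}\oplus\phi_{2})\le\cat(\phi_{1})+\cat(\phi_{2})$ (the map version of the product inequality, cf. \cite{CLOT}), together with the base values $\cat(\Z\to\Z/d)=1$ for $d\ge2$ (cover $S^{1}$ by two arcs; it is $\ge1$ since the map is essential), $\cat(\mathrm{id}_{\Z})=1$ and $\cat(0)=0$; these yield $\cat(\psi)\le r+m=\mu$. For the lower bound I would exhibit one coefficient system detecting a class in degree $r+m$. Here the invariant-factor form is essential: with the trivial coefficient ring $R=\Z/d_{1}$, the divisibility $d_{1}\mid d_{i}$ makes $H^{1}(\Z/d_{i};\Z/d_{1})=\mathrm{Hom}(\Z/d_{i},\Z/d_{1})=\Z/d_{1}$ nonzero, and the reduction $\Z/d_{i}\to\Z/d_{1}$ is a generator that pulls back along $\psi$ to the standard generator $x_{i}\in H^{1}(S^{1};\Z/d_{1})$. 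Taking the cross product of these classes with the top class of the torus factor $T^{m}$, naturality of the cross product shows $\psi^{*}$ carries it to $x_{i_{1}}\cdots x_{i_{r}}\,x_{j_{1}}\cdots x_{j_{m}}$, a nonzero basis element of the exterior algebra $H^{*}(T^{n};\Z/d_{1})$ (coefficient $1$, so no cancellation). Hence $\psi^{*}$ is nonzero in degree $r+m$ and $\cd(\psi)\ge r+m=\mu$ (when $r=0$ one uses $\Z$-coefficients instead). Combining, $\mu\le\cd(\psi)\le\cat(\psi)\le\mu$, and unwinding the reductions gives $\cat(\phi)=\cd(\phi)=\mu$.

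The main obstacle is the lower bound, and precisely the choice of a single coefficient module that simultaneously detects every torsion summand: a naive prime-field or tensor-of-modules approach fails because distinct torsion primes annihilate one another (e.g. $\Z/d\otimes\Z/e=0$ when $\gcd(d,e)=1$). The resolution is to pass to the invariant-factor decomposition, where $d_{1}\mid d_{i}$ guarantees that the single ring $\Z/d_{1}$ sees all $r$ cyclic factors at once and that the relevant degree-one classes pull back to honest generators. The remaining point to pin down carefully is the map version of the product inequality for $\cat$ used in the upper bound; its proof follows the standard combinatorial argument for products of categorical covers, now applied to open sets on which the two maps are nullhomotopic.
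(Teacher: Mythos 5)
Your proposal is correct, and its overall skeleton coincides with the paper's: reduce to an epimorphism via the reduction of \cite{DK}, kill the torsion of the source by factoring through $\Gamma/T(\Gamma)\cong\Z^n$ (with the same two-sided composition argument for both $\cat$ and $\cd$), put the resulting surjection $\Z^n\to\Lambda$ into invariant-factor/Smith form (the paper does this in two steps, Lemma \ref{Main} plus the splitting Lemma \ref{split}), and prove the upper bound by the product inequality $\cat(\phi_1\oplus\phi_2)\le\cat(\phi_1)+\cat(\phi_2)$ from \cite{CLOT}. Where you genuinely diverge is the lower bound. The paper (Lemma \ref{Smith} and Theorem \ref{Abelian}) picks a prime $p$ dividing $n_1$, works with the field $\Z_p$, computes $H^j(B\Z_{n_i};\Z_p)=\Z_p$ via the Universal Coefficient Formula, and then runs an induction on $k$ through the field-coefficient Kunneth isomorphism to produce a class in degree $m+k$ that pulls back nontrivially. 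You instead take the (generally non-prime) ring $\Z/d_1$, use the divisibility $d_1\mid d_i$ to write down the explicit degree-one classes given by the reduction homomorphisms $\Z/d_i\to\Z/d_1$, and check by naturality of the cross product that their product with the top class of $T^m$ pulls back to a basis monomial of the exterior algebra $H^*(T^n;\Z/d_1)$. This is cleaner on two counts: you need no induction and no field hypothesis (no Kunneth theorem for the target at all), because nonvanishing of the pullback is verified entirely in the torus, whose integral cohomology is free so that cross products of generators are honest basis elements over any coefficient ring. The trade-off is that your argument leans on the explicit diagonal form $\rho\oplus 0\oplus\mathrm{id}_{\Z^m}$ of the map, which is exactly what the paper's Lemmas \ref{Main} and \ref{split} establish, so you should either cite the structure theorem for a pair (free group, subgroup) over a PID or reproduce that reduction; and, as you note yourself, the map version of the product inequality for $\cat$ must be quoted or proved, just as the paper quotes it from \cite{CLOT}. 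Both proofs hinge on the same key insight: the invariant-factor divisibility lets a single coefficient module see all torsion summands at once.
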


The condition $\phi(T(\Gamma))=0$ cannot be dropped in view of the following
\begin{exmp}[A. Dranishnikov]\label{Example}
For an  epimorphism $\phi:\mathbb Z_{p^2}\to\mathbb Z_p$,
$$
\cd(\phi)=2\ \ \ \text{and}\ \ \ \cat(\phi)=\infty.
$$
\end{exmp}
\begin{proof}
The proof of the inequality $\cd(\phi\le 2$ is based on the following unpublished observation by Bestvina and Edwards: {\em There is a degree 0 map $$f:L^3_{p^2}\to L^3_p$$ of lens spaces  that induces an epimorphism $f_*:\mathbb Z_{p^2}\to\mathbb Z_p$ of the fundamental groups.} 

Here is the construction of $f$. 
We consider two circles $S^1$ with $\mathbb Z_p$ and $\mathbb Z_{p^2}$ free actions.
Note that a $\mathbb Z_{p^2}$-equivariant map $\psi:S^1\to S^1$ between them has degree $p$. Then the join product $\psi\ast\psi:S^1\ast S^1\to S^1\ast S^1$ has degree $p^2$.
Hence it induces a map of the orbit spaces $\bar\psi:L^3_{p^2}\to L^3_p$ of degree $p$. Let $q_k:S^3\to L_{p^k}^3$ denote the projection onto the orbit space.

We define $f$ as the composition
$$
L^3_{p^2}=L^3_{p^2}\#S^3\to L^3_{p^2}\vee S^3\stackrel{1\vee r}\to L^3_{p^2}\vee S^3\stackrel{\bar\psi\vee q_1}\to L^3_{p}\vee L^3_{p}\stackrel{j}\to L^3_p
$$
where  $r:S^3\to S^3$ has degree -1 and $j$ identifies two copies of $L^3_p$.
Thus, $deg(f)=p-p=0$.

Claim 1. The induced homomorphism $f^*:H^3(L^3_p;M)\to H^3(L^3_{p^2};M)$ is trivial for any $\mathbb Z_p$-module $M$.

Indeed, if $f^*(\alpha)\ne 0$ for $\alpha\in H^3(L_p^3;M)$ then by the Poincare duality with local coefficients $[L^3_{p^2}]\cap f^*(\alpha)\ne 0$. Since $f$ induces an epimorphism of the fundamental groups, the induced homomorphism for 0-homology $f_*:H_0(L^3_{p^2};M) \to H_0(L^3_p;M)$ is an isomorphism.We obtain a contradiction 
$$0\ne f_*([L^3_{p^2}]\cap f^*(\alpha))=f_*([L^3_{p^2}])\cap\alpha= 0.$$

The map $f$ extends to a map $\phi:L^\infty_{p^2}\to L^\infty_{p}$.

Claim 2. The induced homomorphism $\phi^*:H^3(L^\infty_p;M)\to H^3(L^\infty_{p^2};M)$ is trivial for any $\mathbb Z_p$-module $M$.

This follows from the fact that the inclusion homomorphism $H^k(X;M)\to H^k((X^{(k)};M)$ is injective for any CW-complex $X$ and a $\pi_1(X)$-module $M$.

The shift of dimension for group cohomology~\cite{Br} implies that $$\phi^*:H^k(L^\infty_p;M)\to H^k(L^\infty_{p^2};M)$$ is trivial for all $k\ge 3$.
Thus, $\cd(\phi)\le 2$. Using integral cohomology one can check that $\cd(\phi)\ge 2$.

Next we show that $\cat(\phi)=\infty$. This follows from the fact that the reduced K-theory cup-length of $\phi$ is unbounded. By the Atiyah's computations~\cite{AS}, $$K(B\mathbb Z_{p^k})=\mathbb Z[[\eta_k]]/(\eta_k^{p^k}-1)$$ where $\eta_k$ is the pull-back of the canonical line bundle $\eta$ over $\mathbb CP^\infty$ under the inclusion
$\mathbb Z_{p^k}\to S^1$ and $\mathbb Z[[x]]$ is the ring of formal series. The reduced K-theory of $B\mathbb Z_{p^k}$ is the subring generated by $\eta_k-1$.
The induced homomorphism $$\phi^*:K(B\mathbb Z_p)\to K(B\mathbb Z_{p^2})$$  takes $\eta_1$ to $\eta_2^p$. Then it takes $(\eta_1-1)^m$ to 
$(\eta_2^p-1)^m$. To see that $(\eta_2^p-1)^m\ne 0$ in $\mathbb Z[[\eta_2]]/(\eta_2^{p^2}-1)$ for all $m$ it suffices to show that it is not zero in
$\mathbb Z_p[[\eta_2]]/(\eta_2^{p^2}-1)$ for $m=pk+1$. Note that $$(\eta_2^p-1)^{pk+1}=\eta_2^p-1\ne 0$$ in $\mathbb Z_p[[\eta_2]]/(\eta_2^{p^2}-1)$.
This implies $\cat(\phi)\ge m$ for any $m$ (see Proposition 5.1 in~\cite{D}).
\end{proof}

\section{Preliminaries}

In this section we recall some classic theorems used in the paper.

Given positive integers $m,n \geq 1,$ we denote by $M_{m\times n}(\Z)$the set of $m\times n$ matrices with integer entries. 

\begin{thm}[The Smith normal form]\label{SNF} Given a nonzero matrix $A \in M_{m\times n}(\Z)$, there exist invertible matrices $P \in M_{m \times m}(\Z)$ and  $Q \in M_{n\times n}(\Z)$ such that 

\[PAQ=
  \left[ {\begin{array}{ccccccc}
    n_{1}  & 0  & \cdots & 0            & 0 & \cdots &0\\
    0 & n_{2}  & \cdots & 0             & 0 & \cdots &0\\
    \vdots & \vdots  & \ddots & \vdots  & \vdots & \ddots&\vdots\\
    0 & 0  & \cdots & n_{k}             & 0 & \cdots &0\\
     0 & 0  & \cdots & 0             & 0 & \cdots &0\\
    \vdots & \vdots  & \ddots & \vdots  & \vdots & \ddots&\vdots\\
    0 & 0  & \cdots & 0                 & 0 & \cdots&0\\
  \end{array} } \right]
\]
where the integer $n_{i}\geq 1$ are unique up to sign and satisfy $n_{1}|n_{2}|\cdots |n_{k}.$ Further, one can compute the integers $n_{i}$ by the recursive formula $n_{i}=\dfrac{d_{i}}{d_{i-1}},$ where $d_{i}$ is the greatest common divisor of all $i\times i$-minors of the matrix A and $d_{0}$ is defined to be 1.
\end{thm}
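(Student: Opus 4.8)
The plan is to establish existence and uniqueness separately: existence by reducing $A$ through integer row and column operations, and uniqueness (together with the recursive formula) by showing that the greatest common divisors of the minors of $A$ are invariant under such operations. For existence I would exploit that $\Z$ is a Euclidean domain. Left multiplication by an invertible $P$ and right multiplication by an invertible $Q$ realize arbitrary sequences of integer row and column operations---swapping two rows or columns, negating a row or column, and adding an integer multiple of one row or column to another---since each such operation is given by an elementary matrix of determinant $\pm 1$. The core reduction is that by such operations one can arrange the $(1,1)$ entry $a_{11}$ to be nonzero and to divide every other entry. To achieve this I would repeatedly select a nonzero entry of minimal absolute value, move it to position $(1,1)$, and apply division with remainder: whenever $a_{11}$ fails to divide an entry in its own row or column, the remainder produced is a nonzero entry of strictly smaller absolute value, which by the well-ordering of $\N$ cannot recur indefinitely. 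Once $a_{11}$ divides its row and column I clear them; if $a_{11}$ still fails to divide some entry of the remaining block, adding that entry's row to the first row and repeating again strictly decreases the minimal absolute value. After finitely many steps $A$ is equivalent to $\mathrm{diag}(n_1, A')$ with $n_1$ dividing every entry of $A'$. Inducting on $\min(m,n)$ applied to $A'$ then yields the diagonal form, and $n_1 \mid n_2$ holds because $n_1$ divides every entry of $A'$ and hence divides the $\gcd$ of its entries, which is the first invariant factor $n_2$.

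For uniqueness and the recursive formula I would introduce $d_i(A)$, the $\gcd$ of all $i \times i$ minors of $A$, with $d_0 = 1$. The key lemma is that $d_i$ is unchanged under multiplication by invertible integer matrices: by the Cauchy--Binet formula every $i \times i$ minor of $PA$ is a $\Z$-linear combination of $i \times i$ minors of $A$, so $d_i(A)$ divides $d_i(PA)$; applying the same with $P^{-1}$ gives the reverse divisibility, and likewise for right multiplication by $Q$, whence $d_i(PAQ) = d_i(A)$ up to sign. It then suffices to compute $d_i$ for the diagonal form $D$. Every nonzero $i \times i$ minor of $D$ is a product of $i$ distinct diagonal entries $n_{j_1} \cdots n_{j_i}$, and the divisibility chain $n_1 \mid n_2 \mid \cdots$ forces $n_1 n_2 \cdots n_i$ to divide each such product while itself being the minor of the top-left block; hence $d_i(D) = n_1 n_2 \cdots n_i$. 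Combining the two facts gives $n_1 \cdots n_i = d_i(D) = d_i(A)$, so $n_i = d_i(A)/d_{i-1}(A)$, a quantity depending only on $A$. This simultaneously shows that the $n_i$ are determined up to sign and establishes the stated formula.

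I expect the main obstacle to lie in the termination argument for existence: one must verify that the selection-and-reduction procedure genuinely halts and that it can always be steered so the final pivot $n_1$ divides the \emph{entire} remaining block, not merely its own row and column. The uniqueness half is more formal, the only real subtlety being the bookkeeping in the Cauchy--Binet step needed to make the divisibility run in both directions.
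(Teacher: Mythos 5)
Your proposal is correct and complete: the existence half via Euclidean reduction on a minimal-absolute-value pivot (with the standard trick of folding a non-divisible entry's row into the first row to force strict descent), and the uniqueness half via the invariance of the determinantal divisors $d_i$ under multiplication by invertible integer matrices (Cauchy--Binet in both directions), is exactly the standard argument. The paper itself gives no proof of this theorem --- it simply cites Hungerford \cite[Proposition 2.11, p.339]{Hu} --- and your argument is essentially the one found in that reference, so there is nothing to compare beyond noting that you have correctly supplied what the paper outsources.
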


The proof of Theorem~\ref{SNF} can be found in ~\cite[Proposition 2.11, p.339]{Hu}. 

\begin{cor}{\label{square}} Given a matrix  $A\in M_{n\times n}(\Z)$ with $det(A)\neq 0,$  there exist invertible matrices  $P\in M_{n\times n}(\Z)$ and $Q\in M_{n\times n}(\Z)$ such that 

\[PAQ=
  \left[ {\begin{array}{ccccccc}
    1  & 0  & \cdots & 0            & 0 & \cdots &0\\
    0 & 1  & \cdots & 0             & 0 & \cdots &0\\
    \vdots & \vdots  & \ddots & \vdots  & \vdots & \ddots&\vdots\\
    0 & 0  & \cdots & 1             & 0 & \cdots &0\\
     0 & 0  & \cdots & 0             & n_{1} & \cdots &0\\
    \vdots & \vdots  & \ddots & \vdots  & \vdots & \ddots&\vdots\\
    0 & 0  & \cdots & 0                 & 0 & \cdots& n_{k}\\
  \end{array} } \right]
\]
where the integer $n_{i}\geq 2$ are unique up to sign and satisfy $n_{1}|n_{2}|\cdots |n_{k}.$
\end{cor}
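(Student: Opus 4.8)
The plan is to deduce this corollary directly from the Smith normal form (Theorem~\ref{SNF}), using the hypothesis $\det(A)\neq 0$ to eliminate the zero entries on the diagonal. First I would apply Theorem~\ref{SNF} to the square matrix $A$, obtaining invertible matrices $P,Q\in M_{n\times n}(\Z)$ with $PAQ=\operatorname{diag}(n_1,\dots,n_k,0,\dots,0)$, where $n_1\mid n_2\mid\cdots\mid n_k$ and each $n_i\geq 1$.

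Next I would compute determinants. Since $P$ and $Q$ are invertible over $\Z$, their inverses also have integer entries, which forces $\det(P),\det(Q)\in\{\pm 1\}$. Hence $\det(PAQ)=\det(P)\det(A)\det(Q)=\pm\det(A)\neq 0$. On the other hand, $\det(PAQ)$ equals the product of the diagonal entries $n_1\cdots n_k\cdot 0\cdots 0$, which vanishes as soon as a single diagonal entry is zero. Therefore there can be no zero entries on the diagonal, which forces $k=n$; that is, the Smith normal form of $A$ is the full-rank diagonal matrix $\operatorname{diag}(n_1,\dots,n_n)$ with $n_1\mid\cdots\mid n_n$.

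Finally I would sort the diagonal entries according to the divisibility chain. Observe that if some $n_i=1$, then the relations $n_1\mid n_2\mid\cdots\mid n_i=1$ together with $n_j\geq 1$ force $n_1=\cdots=n_i=1$; hence every entry equal to $1$ precedes every entry $\geq 2$. Letting the first block consist of the $1$'s and relabeling the remaining diagonal entries, which are all $\geq 2$ and still satisfy the divisibility condition, as $n_1\mid n_2\mid\cdots\mid n_k$ yields precisely the block form in the statement.

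This argument is essentially routine, and no step presents a serious obstacle. The only point requiring care is the determinant computation ruling out zero entries, which relies on the fact that an invertible integer matrix has determinant $\pm 1$, so that $\det(PAQ)$ differs from $\det(A)$ only by a sign and thus remains nonzero.
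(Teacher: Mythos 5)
Your proof is correct and takes exactly the route the paper intends: the paper states this corollary as an immediate consequence of Theorem~\ref{SNF} without writing out a proof, and your argument --- determinants of $\Z$-invertible matrices are $\pm 1$, so $\det(PAQ)\neq 0$ rules out zero diagonal entries, and the divisibility chain forces all entries equal to $1$ to precede those $\geq 2$ --- is precisely the standard deduction. The uniqueness claim is inherited directly from the uniqueness clause of Theorem~\ref{SNF}, which your relabeling of the invariant factors implicitly uses.
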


\begin{thm}[Invariant Factor Decomposition (IDF) for Finite Abelian Groups]{\label{IDF}} Every finite abelian group $\Gamma$ can be written uniquely as $\Gamma=\Z_{n_{1}}\times...\times \Z_{n_{k}}$ where the integers $n_{i}\geq 2$ are the invariant factors of $\Gamma$ that satisfy $n_{1}|n_{2}|...|n_{k}$ and $\Z_{n_{i}}$ are cyclic group of order $n_{i}, i=1,\cdots,k.$
\end{thm}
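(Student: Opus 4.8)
The plan is to realize $\Gamma$ as the cokernel of a square integer matrix and then diagonalize that matrix using Theorem~\ref{SNF}. Since $\Gamma$ is finite it is in particular finitely generated, so a choice of generators $g_1,\dots,g_n$ yields a surjection $\pi:\Z^n\to\Gamma$. Its kernel $K=\Ker\pi$ is a subgroup of the free abelian group $\Z^n$, hence is itself free abelian of some rank $r\le n$. Because $\Gamma=\Z^n/K$ is finite, the quotient can contain no copy of $\Z$, which forces $r=n$; thus $K$ has full rank. Expressing a basis of $K$ in terms of the standard basis of $\Z^n$ produces a matrix $A\in M_{n\times n}(\Z)$ with $\det A\neq 0$ and $\Gamma\cong \Z^n/A\Z^n$.

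For existence, I would apply Corollary~\ref{square} to $A$, obtaining invertible matrices $P,Q\in M_{n\times n}(\Z)$ such that $D:=PAQ$ is diagonal with entries $1,\dots,1,n_1,\dots,n_k$, where $n_i\ge 2$ and $n_1\mid\cdots\mid n_k$. Multiplying a presentation matrix on either side by an invertible integer matrix leaves the isomorphism type of its cokernel unchanged: right multiplication by $Q$ fixes $A\Z^n$ since $Q\Z^n=\Z^n$, while left multiplication by $P$ is an automorphism of $\Z^n$ carrying $A\Z^n$ onto $PA\Z^n$. Hence $\Gamma\cong\Z^n/D\Z^n\cong\bigoplus_{i=1}^{n}\Z/D_{ii}\Z$, and the diagonal entries equal to $1$ contribute trivial summands $\Z/1\Z=0$, leaving $\Gamma\cong\Z_{n_1}\oplus\cdots\oplus\Z_{n_k}$ as claimed.

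For uniqueness, I would show that the list $(n_1,\dots,n_k)$ is recoverable from $\Gamma$ alone. The cleanest intrinsic invariant is, for each positive integer $d$, the order of the $d$-torsion subgroup $\Gamma[d]=\{x\in\Gamma:\ dx=0\}$; for $\Gamma=\bigoplus_i\Z_{n_i}$ one computes $|\Gamma[d]|=\prod_i\gcd(d,n_i)$. Fixing a prime $p$ and taking $d=p,p^2,\dots$ recovers the multiset of $p$-adic valuations $v_p(n_i)$, i.e.\ the partition describing the $p$-primary part of $\Gamma$; running over all primes $p$ then recovers each $n_i=\prod_p p^{v_p(n_i)}$, and the divisibility chain $n_1\mid\cdots\mid n_k$ pins down the ordering. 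One may equivalently use the filtration dimensions $\dim_{\mathbb{F}_p}(p^{i}\Gamma/p^{i+1}\Gamma)$.

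The hard part is the uniqueness step, since existence is an essentially formal consequence of the already-established Smith normal form. The subtlety is that the presentation matrix $A$ is far from canonical: it depends on the chosen generating set and on the chosen basis of $K$, so one cannot simply invoke uniqueness of the Smith normal form of a fixed matrix. One must instead prove that the $n_i$ are intrinsic to the group, which is exactly what the quantities $|\Gamma[d]|$ accomplish, as they are manifestly independent of all choices yet determine, and are determined by, the invariant factors.
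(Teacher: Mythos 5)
Your proof is correct, and it is worth noting how it sits relative to the paper: the paper does not prove Theorem~\ref{IDF} at all, but instead cites \cite[Theorem 3, p.158]{DF} and remarks that ``alternatively, one can apply Corollary~\ref{square}.'' Your existence argument is precisely a careful execution of that one-line alternative: present $\Gamma$ as $\Z^n/K$, observe that finiteness of the quotient forces $K$ to have full rank so that the presentation matrix $A$ is square with $\det A\neq 0$, and then use the invariance of the cokernel under $A\mapsto PAQ$ to read off $\Gamma\cong\Z_{n_1}\oplus\cdots\oplus\Z_{n_k}$. Where you go beyond the paper's sketch is in the uniqueness step, which the citation to Dummit--Foote silently absorbs and which, as you rightly point out, does \emph{not} follow from uniqueness of the Smith normal form of a fixed matrix, since the presentation matrix depends on choices. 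Your intrinsic invariant $|\Gamma[d]|=\prod_i\gcd(d,n_i)$ does the job: taking $d=p^j$ gives $\log_p|\Gamma[p^j]|=\sum_i\min\bigl(j,v_p(n_i)\bigr)$, whose successive differences count $\#\{i: v_p(n_i)\ge j\}$ and hence recover the multiset of valuations; one small point you compress is the alignment across primes, which deserves a sentence --- namely that $k=\dim_{\mathbb{F}_p}\Gamma[p]$ for any prime $p$ dividing $n_1$, and the chain $n_1\mid\cdots\mid n_k$ forces each sequence $\bigl(v_p(n_1),\dots,v_p(n_k)\bigr)$ (padded with zeros) to be nondecreasing, so sorting the recovered multisets determines every $v_p(n_i)$ and hence every $n_i$. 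With that sentence added, your argument is a complete, self-contained proof, and it has the merit of making explicit exactly which part of the theorem the Smith normal form machinery delivers (existence) and which part requires a separate, choice-independent invariant (uniqueness).
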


The proof of Theorem~\ref{IDF} can be found in ~\cite[Theorem 3, p.158]{DF}. Alternatively, one can apply Corollary ~\ref{square} and get the result. 

\begin{defn}
Given a finite abelian group $\Gamma$, the Smith normal number $k(\Gamma)$ of $\Gamma$ is the number $k$ from Theorem ~\ref{IDF}.     
\end{defn}

In the proof of our main result about homomorphism between finite groups we use Shapiro's Lemma ~\cite[Proposition 6.2, page 73]{Br}. 

\begin{thm}[Shapiro's Lemma]{\label{Shapiro}}
    If $i:\Gamma\to \Lambda$ is a monomorphism and $M$ is an $\Gamma$-module, then the composition
     $$H^{*}(\Lambda,Coind_{\Gamma}^{\Lambda}M)\stackrel{i^*}\to H^*(\Gamma,Coind_{\Gamma}^{\Lambda}M)\stackrel{\alpha_*}\to H^*(\Gamma,M)$$
     is an isomorphism,
where $Coind_{\Gamma}^{\Lambda}M=Hom_{\Z \Gamma}(\Z \Lambda,M)$ and the homomorphism of coefficients $\alpha:Hom_{\Z \Gamma}(\Z \Lambda,M)\to M$ is defined as $\alpha(f)=f(e)$.
\end{thm}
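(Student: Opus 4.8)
The plan is to prove Shapiro's Lemma by computing both cohomology groups from a \emph{single} projective resolution and then recognizing the displayed composition $\alpha_*\circ i^*$ as the restriction--coinduction adjunction isomorphism already at the cochain level, so that passing to cohomology is automatic. First I would fix a projective resolution $P_\bullet\to\Z$ of the trivial module over $\Z\Lambda$, so that by definition $H^*(\Lambda,\Coind_\Gamma^\Lambda M)$ is the cohomology of the cochain complex $\operatorname{Hom}_{\Z\Lambda}(P_\bullet,\Coind_\Gamma^\Lambda M)$. The key structural observation is that, because $i:\Gamma\to\Lambda$ is injective, $\Z\Lambda$ is a \emph{free} $\Z\Gamma$-module: choosing coset representatives for $i(\Gamma)$ in $\Lambda$ exhibits $\Z\Lambda$ as a direct sum of copies of $\Z\Gamma$. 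Consequently the restriction of each projective $\Z\Lambda$-module $P_n$ is a projective $\Z\Gamma$-module, so $P_\bullet$, viewed over $\Z\Gamma$, is itself a projective resolution of $\Z$. This lets me compute $H^*(\Gamma,M)$ as the cohomology of $\operatorname{Hom}_{\Z\Gamma}(P_\bullet,M)$ using the \emph{same} resolution $P_\bullet$.

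Next I would trace the two maps in the statement at the cochain level. Since $P_\bullet$ also serves as a $\Z\Gamma$-projective resolution of $\Z$, the restriction map $i^*$ is induced by the inclusion $\operatorname{Hom}_{\Z\Lambda}(P_\bullet,\Coind_\Gamma^\Lambda M)\hookrightarrow\operatorname{Hom}_{\Z\Gamma}(P_\bullet,\Coind_\Gamma^\Lambda M)$ that regards a $\Z\Lambda$-linear map as merely $\Z\Gamma$-linear, while $\alpha_*$ is post-composition with the evaluation $\alpha(f)=f(e)$. Thus the composite $\alpha_*\circ i^*$ sends a cochain $\varphi$ to $\alpha\circ\varphi$. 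I would then identify this assignment with the standard adjunction isomorphism
$$\operatorname{Hom}_{\Z\Lambda}(P_\bullet,\Coind_\Gamma^\Lambda M)\ \xrightarrow{\ \cong\ }\ \operatorname{Hom}_{\Z\Gamma}(P_\bullet,M),\qquad \varphi\mapsto \alpha\circ\varphi,$$
which witnesses that $\Coind_\Gamma^\Lambda$ is right adjoint to restriction, with inverse sending $\psi$ to the map $p\mapsto(\lambda\mapsto\psi(\lambda p))$. Because this is an isomorphism of cochain complexes in every degree, it induces an isomorphism on cohomology, which is exactly the claim.

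The main obstacle is the bookkeeping that makes the abstract adjunction coincide with the concrete composition $\alpha_*\circ i^*$: one must verify that ``evaluate at $e$'' and ``extend $\Z\Gamma$-linearly'' are mutually inverse, and in particular that the candidate inverse $\psi\mapsto(p\mapsto(\lambda\mapsto\psi(\lambda p)))$ really lands in $\Z\Lambda$-linear maps into $\Coind_\Gamma^\Lambda M$. This is precisely where injectivity of $i$ and the freeness of $\Z\Lambda$ over $\Z\Gamma$ are used. One must also check that $i^*$ and $\alpha_*$ are genuine chain maps, so that the degreewise cochain isomorphism $\varphi\mapsto\alpha\circ\varphi$ commutes with the coboundary and therefore descends to cohomology without naturality or sign ambiguities. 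Once this cochain-level isomorphism is in hand, the conclusion follows immediately.
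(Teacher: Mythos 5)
Your proof is correct, and it is essentially the argument the paper relies on: the paper states Shapiro's Lemma without proof, citing Brown~\cite[Proposition 6.2, p.~73]{Br}, whose proof is exactly your route --- $\Z\Lambda$ is free over $\Z\Gamma$ by injectivity of $i$, so a projective $\Z\Lambda$-resolution $P_\bullet$ of $\Z$ restricts to a projective $\Z\Gamma$-resolution, and the adjunction $\operatorname{Hom}_{\Z\Lambda}(P_\bullet,\operatorname{Hom}_{\Z\Gamma}(\Z\Lambda,M))\cong\operatorname{Hom}_{\Z\Gamma}(P_\bullet,M)$, given by $\varphi\mapsto\alpha\circ\varphi$, is a cochain isomorphism realizing $\alpha_*\circ i^*$. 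Your verification that the candidate inverse $\psi\mapsto\bigl(p\mapsto(\lambda\mapsto\psi(\lambda p))\bigr)$ is well defined and $\Z\Lambda$-equivariant is precisely the needed bookkeeping, so nothing is missing.
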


 In the paper we use the notation $H^*(\Gamma,A)$ for the cohomology of a group $\Gamma$ with coefficients in a $\Gamma$-module $A$. The cohomology groups with twisted coefficients in $A$ of a space $X$ with fundamental group $\Gamma$ we denote as $H^*(X;A)$. Thus, $H^*(\Gamma,A)=H^*(B\Gamma;A)$ where $B\Gamma=K(\Gamma,1)$. 
 
 We say that a CW-complex $X$ is of {\em finite type} if each $n$-skeleton $X^{(n)}$ of   $X$ is finite.

The Kunneth Formula for cohomology of product of two spaces with field coefficient $F$  can be found in Spanier ~\cite[Theorem 5.5.11]{Sp} or Dold ~\cite[Proposition VI.12.16]{Do}. We state a special case, which will be used in the paper. 

\begin{thm}[The Kunneth Formula]\label{KFT} Let $F$ be a field. If the CW-complexes X, Y are of finite type, then the cross product 
$$\underset{k}{\bigoplus}H^{k}(X;F)\otimes_{F} H^{n-k}(Y;F)\overset{\times}{\to} H^{n}(X\times Y;F)$$
is an isomorphism.     
\end{thm}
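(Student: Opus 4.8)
The plan is to deduce this cohomological statement from its homological counterpart, exploiting two features of the hypotheses: because $F$ is a field the Kronecker pairing sets up a perfect duality between homology and cohomology (with no Ext or Tor corrections), and because $X$ and $Y$ are of finite type the relevant $F$-vector spaces are finite-dimensional in each degree. This finiteness is exactly what licenses passing $F$-duals through tensor products and direct sums, which is where the hypothesis is genuinely used; without it the cross product is injective but typically fails to be surjective.

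First I would record the two classical inputs, both over a field. By the universal coefficient theorem with field coefficients, for any space $Z$ the Kronecker pairing gives a natural isomorphism $H^n(Z;F)\cong\operatorname{Hom}_F(H_n(Z;F),F)$, the Ext term vanishing since $F$ is a field. By the homology Künneth theorem with field coefficients, the homology cross product
$$\bigoplus_k H_k(X;F)\otimes_F H_{n-k}(Y;F)\to H_n(X\times Y;F)$$
is an isomorphism, the Tor term again vanishing over $F$. Next I would invoke finite type: since each skeleton $X^{(n)}$ is finite, the cellular chain groups $C_n(X;F)$ are finite-dimensional, hence so is every $H_n(X;F)$, and likewise for $Y$. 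In particular, for fixed $n$ only the summands with $0\le k\le n$ are nonzero, so the direct sum is finite, and each summand $H_k(X;F)\otimes_F H_{n-k}(Y;F)$ is finite-dimensional.

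Third, I would dualize. Applying $\operatorname{Hom}_F(-,F)$ to the homology Künneth isomorphism and using the universal coefficient identification for $X\times Y$ gives
$$H^n(X\times Y;F)\cong\operatorname{Hom}_F\Big(\bigoplus_{k=0}^n H_k(X;F)\otimes_F H_{n-k}(Y;F),\,F\Big).$$
Because the sum is finite, the dual of the sum is the sum of the duals; because each tensor factor is finite-dimensional, the canonical map $V^*\otimes_F W^*\to(V\otimes_F W)^*$ is an isomorphism. Reassembling these and reapplying the universal coefficient isomorphism for $X$ and for $Y$ separately yields an isomorphism $H^n(X\times Y;F)\cong\bigoplus_k H^k(X;F)\otimes_F H^{n-k}(Y;F)$ of the correct source and target. (Alternatively, one could run the same argument directly on the cellular cochain complex $\operatorname{Hom}_F(C_*(X;F)\otimes_F C_*(Y;F),F)$ via the algebraic Künneth theorem for complexes of finite-dimensional vector spaces, but the duality route keeps the finiteness hypotheses most transparent.)

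The remaining and genuinely delicate point is to verify that the abstract isomorphism produced above \emph{is} the cohomology cross product named in the statement, not merely some isomorphism between its source and target. This is the naturality bookkeeping: one must check that the cohomology cross product agrees, under Kronecker duality, with the $F$-linear dual of the homology cross product. That compatibility comes from the interaction of the Eilenberg–Zilber and Alexander–Whitney maps with the Kronecker pairing, encoded in the identity $\langle \alpha\times\beta,\,a\times b\rangle=\pm\,\langle\alpha,a\rangle\,\langle\beta,b\rangle$ relating the cohomology and homology cross products. I would state this pairing formula explicitly and use it to identify the two maps. I expect this compatibility to be the main obstacle, together with the care needed to confirm that finite type is precisely what makes the duals commute with the tensor products and finite direct sums at each stage.
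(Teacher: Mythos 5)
The paper does not prove Theorem~\ref{KFT} at all: it is quoted as a classical result, with pointers to Spanier \cite[Theorem 5.5.11]{Sp} and Dold \cite[Proposition VI.12.16]{Do}, so there is no internal proof to compare against. Your argument is correct and is one of the two standard routes. The cited textbook proofs work at the (co)chain level, applying the algebraic K\"unneth theorem to the cellular cochain complex of $X\times Y$ (the alternative you mention parenthetically); you instead dualize the homological K\"unneth isomorphism at the level of homology groups, using that over a field the Kronecker pairing gives $H^n(Z;F)\cong\operatorname{Hom}_F(H_n(Z;F),F)$ with no Ext term. Your accounting of where finite type enters is accurate, with one small refinement available: for fixed $n$ the sum over $k$ is finite automatically (negative-degree groups vanish), so finiteness is used only to invert the canonical map $V^*\otimes_F W^*\to(V\otimes_F W)^*$, and for that it suffices that \emph{one} of the two factors be finite-dimensional --- so finite type of one of $X$, $Y$ would already do, which is essentially the hypothesis in Spanier. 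You also correctly isolate the genuine crux, namely that the abstract isomorphism obtained by dualizing agrees with the named cross product; the pairing identity $\langle\alpha\times\beta,\,a\times b\rangle=\pm\langle\alpha,a\rangle\langle\beta,b\rangle$ (proved via the Alexander--Whitney/Eilenberg--Zilber machinery, with the sign harmless since it does not affect bijectivity, and with the observation that the pairing vanishes when bidegrees mismatch, so the direct-sum decompositions match up) is exactly what closes that gap. Your remark that without finiteness the cross product stays injective but can fail to be surjective is also correct. In short: a sound, complete-in-outline proof, trading the textbooks' chain-level algebra for a cleaner duality argument at the cost of the naturality bookkeeping you flagged.
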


We recall the Universal Coefficient Formula (UCF) for cohomology (see~\cite[Theorem 5.5.10]{Sp}).

{\bf{The Universal Coefficient Formula}}: {\em Let the CW-complex X be of finite type and a coefficient group $\Gamma,$ then for each n there is the short exact sequence

$$0\to H^{n}(X){\otimes}_{\Z} \Gamma\to H^{n}(X;\Gamma) \to Tor(H^{n+1}(X),\Gamma)\to 0,$$ which splits.}

\section{Finitely Generated Abelian Groups}

\begin{lemm}{\label{Main}}
Given an epimorphism $\phi:\Z^n\to \Lambda$ with a finite group $G$ there is an epimorphism $\pi:\Z^n\to\Z^k$ such that $\phi=\psi\circ\pi$ where $k=k(\Lambda)$ is the Smith normal number for $\Lambda$, $$\psi=\prod_{i=1}^{k}(\psi_i:\Z\to\Z_{n_i})   ,$$  and the numbers $n_1|\dots|n_k$ are taken from IFD for $\Lambda$ from Theorem ~\ref{IDF}.

\end{lemm}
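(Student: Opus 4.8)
The plan is to analyze $\phi$ through the inclusion of its kernel $R=\ker\phi$ into $\Z^n$. Since $\Lambda$ is finite, $R$ has finite index and is therefore free of rank $n$. Applying the Smith normal form (Theorem~\ref{SNF}) to a matrix expressing a basis of $R$ in terms of a basis of $\Z^n$ --- equivalently, the stacked basis theorem --- I would produce a basis $t_1,\dots,t_n$ of $\Z^n$ together with integers $a_1|a_2|\cdots|a_n$ such that $a_1t_1,\dots,a_nt_n$ is a basis of $R$. Then $\Z^n/R\cong\bigoplus_{i=1}^n\Z_{a_i}$, and by the uniqueness in the Invariant Factor Decomposition (Theorem~\ref{IDF}) the factors $a_i\ge 2$ are exactly the invariant factors $n_1|\cdots|n_k$ of $\Lambda$, while the remaining $a_i$ equal $1$. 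After reordering I may assume $a_1=\cdots=a_{n-k}=1$ and $a_{n-k+j}=n_j$ for $1\le j\le k$.

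Next I would define the map $\pi:\Z^n\to\Z^k$ on this adapted basis by $\pi(t_i)=0$ for $i\le n-k$ and $\pi(t_{n-k+j})=f_j$, where $f_1,\dots,f_k$ is the standard basis of $\Z^k$. By construction $\pi$ is surjective and $\ker\pi=\langle t_1,\dots,t_{n-k}\rangle\subseteq R=\ker\phi$, so $\phi$ descends through $\pi$: there is a unique homomorphism $\bar\psi:\Z^k\to\Lambda$ with $\phi=\bar\psi\circ\pi$, determined by $\bar\psi(f_j)=\phi(t_{n-k+j})=:w_j$. Since the class of $t_{n-k+j}$ in $\Z^n/R$ generates the summand $\Z_{n_j}$, the element $w_j$ has order exactly $n_j$, and the isomorphism $\Z^n/R\xrightarrow{\sim}\Lambda$ induced by $\phi$ exhibits an internal decomposition $\Lambda=\bigoplus_{j=1}^k\langle w_j\rangle$ with $\langle w_j\rangle\cong\Z_{n_j}$ and $n_1|\cdots|n_k$.

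Finally I would identify $\bar\psi$ with the required product map. Using the decomposition $\Lambda=\bigoplus_j\langle w_j\rangle$ just obtained --- itself an invariant factor decomposition of $\Lambda$ --- the map $\bar\psi$ is precisely $\prod_{j=1}^k\psi_j$, where $\psi_j:\Z\to\langle w_j\rangle\cong\Z_{n_j}$ is the surjection sending $1\mapsto w_j$. This yields the desired factorization $\phi=\psi\circ\pi$ with $\pi$ an epimorphism and $\psi=\prod_{j=1}^k\psi_j$.

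I expect the main obstacle to be this last identification in the case where the target decomposition $\Lambda\cong\prod_{i=1}^k\Z_{n_i}$ is regarded as fixed in advance rather than chosen: then passing from the internal decomposition $\bigoplus_j\langle w_j\rangle$ to the prescribed one is an automorphism $\Theta$ of $\Lambda$, and one must verify that $\bar\psi$ stays of product form after this change. I would resolve this by writing $\Theta$ as a composite of a diagonal automorphism (a unit on each factor, absorbed into the freedom in choosing each $\psi_j$) and the reduction of a matrix in $GL_k(\Z)$ (absorbed by replacing $\pi$ with $\beta\circ\pi$ for a suitable $\beta\in GL_k(\Z)$, which preserves surjectivity). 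The essential structural input here is the divisibility chain $n_1|\cdots|n_k$ from the invariant factor ordering, which guarantees that the relevant elementary automorphisms of $\Lambda$ lift to $GL_k(\Z)$.
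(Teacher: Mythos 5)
Your proof is correct and follows essentially the same route as the paper: both apply the Smith normal form to the inclusion of the finite-index kernel $\ker\phi\subseteq\Z^n$ (the paper via the matrices $P,Q$ of Corollary~\ref{square}, you via the equivalent stacked-basis formulation) and then factor $\phi$ through the projection onto the last $k$ adapted coordinates. The worry in your final paragraph is unnecessary: the lemma, and its use in Lemma~\ref{Smith}, only requires the factorization with respect to \emph{some} invariant factor decomposition of $\Lambda$ --- exactly the one your construction produces --- not one fixed in advance, so the automorphism-lifting discussion can be dropped.
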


\begin{proof}
Being a subgroup of $\Z^n$,
the kernel $\ker\phi$ is a free abelian group. Since $G$ is finite,
$\ker\phi$ is isomorphic to $\Z^n$. We fix a basis in $\ker\phi$.
Let $A:\Z^n\to \ker\phi$ be an isomorphism. We regard $A:\Z^n\to\Z^n$ as the embedding.
Then $A$ is given by $n\times n$ matrix the columns of which form our basis. We apply Corollary ~\ref{square} (Smith Normal Form) to get matrices $Q$ and $P$ that change in a special way the bases in the domain of $A$ and the range of $\phi$ respectively. Thus, $AQ(\Z^n)=A(\Z^n)=\ker\phi$.
Then $PAQ(\Z^n)=\ker(\phi P^{-1})$. Then $$G\cong (\phi P^{-1})(\Z^n)=\Z^n\Big/\ker(\phi P^{-1})=\Z^n\Big/PAQ(\Z^n)=$$
$$=\left(\Z^{n-k}\Big/\langle 1\rangle\times\cdots\times\langle 1\rangle\right)\times\left(\Z^k\Big/\langle n_{1}\rangle\times\langle n_{2}\rangle\times \cdots\times\langle n_{k}\rangle\right)=$$
$$=\left(\Z\Big/\Z\times\cdots\Z\Big/\Z\right)\times\left(\Z\Big/n_{1}\Z\times\cdots\times\Z\Big/n_{k}\Z\right)=$$
$$=pr_k(\Z^n)\Big/\langle n_{1}\rangle\times \cdots\times\langle n_{k}\rangle=\psi pr_k(\Z^n)$$ where $pr_k:\Z^n\to\Z^k$ is the projection onto the last k coordinates. Thus, $\phi P^{-1}=\psi pr_k$. Then $\phi=\psi \pi$ with $\pi=pr_k P$.

\end{proof}

\begin{lemm}{\label{Smith}}
Let $\phi:\Z^{n}\to \Lambda$ be an epimorphism, where $\Lambda$ is a finite abelian group. Then $\cat(\phi)=\cd(\phi).$ In particular, $\cat(\phi)=\cd(\phi)=k(\Lambda)$ where k is the Smith normal number for given a finite abelian group $\Lambda$.
\end{lemm}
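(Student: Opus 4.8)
The plan is to sandwich both invariants between $k=k(\Lambda)$ from above and below, using the factorization supplied by Lemma~\ref{Main}. Since $\cd(\phi)\le\cat(\phi)$ always holds, it suffices to prove the two inequalities $\cat(\phi)\le k$ and $\cd(\phi)\ge k$; together these force $k\le\cd(\phi)\le\cat(\phi)\le k$, giving $\cat(\phi)=\cd(\phi)=k$. By Lemma~\ref{Main} I would write $\phi=\psi\circ\pi$ with $\pi:\Z^n\to\Z^k$ a (split) epimorphism and $\psi=\prod_{i=1}^k\psi_i$, where $\psi_i:\Z\to\Z_{n_i}$ are the reductions and $n_1\mid\cdots\mid n_k$ are the invariant factors of $\Lambda$.

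For the upper bound I would first use the monotonicity $\cat(\psi\circ\pi)\le\cat(\psi)$: pulling back along $B\pi$ an open cover of $B\Z^k$ on whose members $B\psi$ is nullhomotopic yields such a cover of $B\Z^n$ for $B\phi=B\psi\circ B\pi$. Next, since $B(\Gamma_1\times\Gamma_2)=B\Gamma_1\times B\Gamma_2$ and a product of homomorphisms induces a product of maps, the product inequality for the category of maps gives $\cat(\psi)\le\sum_{i=1}^k\cat(\psi_i)$. Finally each $\cat(\psi_i)\le 1$, because $B\Z=S^1$ is covered by two contractible open arcs on each of which $B\psi_i$ is nullhomotopic. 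Hence $\cat(\phi)\le\cat(\psi)\le k$.

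For the lower bound I would exhibit a single nonzero degree-$k$ class that survives $\phi^*$. Choose a prime $p\mid n_1$, so that $p\mid n_i$ for every $i$, and take the trivial $\Lambda$-module $M=\Z_p$. Since $p\mid n_i$, the group $H^1(\Z_{n_i};\Z_p)=\operatorname{Hom}(\Z_{n_i},\Z_p)\cong\Z_p$ is nonzero; let $\alpha_i$ denote a generator. By the K\"unneth Formula (Theorem~\ref{KFT}) applied to $B\Lambda=\prod_i B\Z_{n_i}$, the cross product $\alpha_1\times\cdots\times\alpha_k$ is a nonzero element of $H^k(\Lambda;\Z_p)$. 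Because each $\psi_i$ is surjective, $\psi_i^*\alpha_i\ne 0$ in $H^1(S^1;\Z_p)$, and a second application of K\"unneth on $B\Z^k=(S^1)^k$ shows that $\psi^*(\alpha_1\times\cdots\times\alpha_k)=\psi_1^*\alpha_1\times\cdots\times\psi_k^*\alpha_k$ is the generator of $H^k(\Z^k;\Z_p)$. Finally, $\pi$ is a split surjection of free abelian groups, so $\pi^*$ is injective and $\phi^*(\alpha_1\times\cdots\times\alpha_k)=\pi^*\psi^*(\alpha_1\times\cdots\times\alpha_k)\ne 0$. Thus $\cd(\phi)\ge k$.

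I expect the main obstacle to be the clean justification of the upper bound, specifically invoking the product inequality $\cat(\psi)\le\sum_i\cat(\psi_i)$ for the category of homomorphisms and confirming that it transfers correctly to the relative setting on classifying spaces; once the prime $p$ and the generators $\alpha_i$ are fixed, the cohomological lower bound is a routine naturality-and-K\"unneth computation.
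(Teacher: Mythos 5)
Your proposal is correct and follows essentially the same route as the paper: factor $\phi=\psi\circ\pi$ via Lemma~\ref{Main}, bound $\cat(\phi)$ above by $k$ using standard LS-category facts, and detect a nonzero class in degree $k$ with $\Z_p$-coefficients ($p\mid n_1$) via the K\"unneth formula and naturality of cross products. The only cosmetic differences are that the paper bounds $\cat(\psi)$ by $\cat(T^k)=k$ rather than by $\sum_i\cat(\psi_i)$, and organizes the lower-bound computation as an induction on $k$ instead of a single $k$-fold cross product.
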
 

\begin{proof}
Since $\cd(\phi)\leq\cat(\phi)$ for any group homomorphism ~\cite{DK}, we just need to show two inequalities, i.e $\cat(\phi)\leq k(\Lambda)$ and $k(\Lambda)\leq \cd(\phi).$ Then, observing the chain inequalities $k(\Lambda)\leq\cd(\phi)\leq\cat(\phi)\leq k(\Lambda),$ we obtain the conclusion of Lemma \ref{Smith}. 

 Let us show the first inequality $\cat(\phi)\leq k(\Lambda).$ Let $k=k(\Lambda).$ By Lemma ~\ref{Main}, there exists an epimorphisms $\pi:\Z^{n}\to\Z^{k}$ and $\psi:\Z^{k}\to \Lambda$ such that we have the following commutative diagram:  
$$
\begin{tikzcd}
 &  \Z^{n} \arrow[dl, "\pi"] \arrow[d, "\phi"]&  \\
\Z^{k} \arrow[r, "\psi"] & \Lambda \arrow[r] & 0 . \\
\end{tikzcd} 
$$
Using well-known facts on the LS-category $\cat$ ~\cite{CLOT}, we obtain:
$$\cat(\phi)\leq\min\{\cat(\psi),\cat(\pi))\}\leq \cat(\psi)\leq \min\{\cat(T^{k}),\cat(B\Lambda)\}\leq\cat(T^{k})=k$$ where $T^{k}$ is the $k$ dimensional torus. 

Since $B\pi:T^{n}\to T^{k}$ is a retraction, $\pi$ is injective on cohomology, so we have $\cd(\phi)=\cd(\psi).$ Then to prove the second inequality $k(\Lambda)\leq \cd(\phi),$ it suffices to show $k(\Lambda)\leq\cd(\psi).$ 

We do it by induction on $k=k(\Lambda).$ 
When k=1 we have $\Lambda=\Z_{n_1}$. Then the homomorphism $\psi^{*}=\psi_1^{*}:H^{1}(B\Z_{n_{1}};Z_{n_{1}})\to H^{1}(B\Z;Z_{n_{1}})$ is nonzero, since $\psi:\Z\to\Z_{n_{1}}$ is surjective.  

Suppose the result holds true for all $l\leq k$. 
First we note that by Theorem \ref{IDF}
the group $\Lambda$ for $k(\Lambda)=k+1$ is written uniquely as $\Lambda=\Z_{n_{1}}\times\cdots\Z_{n_{k+1}}$ with $n_{1}|\cdots|n_{k+1}.$
Note also that $B\Lambda$ can be presented as the product
$B\Z_{n_1}\times\dots\times B\Z_{n_{k+1}}$.
Let $p$ be a prime that divides $n_1$ and, hence, all $n_i$.
We show that the induced homomorphism $$\psi^{*}:H^{k+1}(B\Z_{n_{1}}\times ...\times B\Z_{n_{k+1}};\Z_p )\to H^{k+1}(T^{k+1};\Z_p)$$ is a nonzero homomorphism.

It is known that the integral cohomology groups $H^{j}(B\Z_m;\Z)$ are $\Z_m$ if $j$ is even and zero otherwise~\cite{Ha}. Note that for prime $p$ dividing $m$
by the Universal Coefficient Formula $H^{j}(B\Z_m;\Z_{p})=\Z_{p}$ for all $j$, since $\Z_m\otimes\Z_{p}=\Z_{p}$ and $\text{Tor}(\Z_m,\Z_{p})=\Z_{p}.$ Thus, for prime $p$ dividing $n_1$
we obtain $H^{j}(B\Z_{n_{i}};\Z_{p})=\Z_{p}$ for all $i$ and $j$. 
Since for each $n_{i}$ the CW-complex $B\Z_{n_{i}}$ are of finite type, we can apply the Kunneth Formula~\ref{KFT}.
By the Kunneth Formula with a field coefficient $\Z_{p},$ and induction, we get that $H^{j}(B(\Z_{n_{1}}\times ...\times \Z_{n_{k+1}});\Z_{p})$ is not zero for all $j$.  Clearly for the $(k+1)$-torus $T^{k+1}$ we have $H^{k+1}(T^{k+1};\Z_{p})=\Z_{p}.$  Using commutative diagram below, we get that $\psi$ is a nonzero homomorphism for the mod $p$ cohomology in dimension $k+1$.
$$
\tikzset{node distance=0.005mm, auto}
\begin{tikzcd}[scale=1]
H^{k}(B\Z_{n_{1}}\times ...\times B\Z_{n_{k}});\Z_{p})\otimes H^{1}(B\Z_{n_{k+1}};\Z_{p}) \arrow[r, "\times"] \arrow[d, "\psi^{*}\otimes \psi^{*}"] &  H^{k+1}(B\Z_{n_{1}}\times ...\times B\Z_{n_{k+1}});\Z_{p}) \arrow[d, "\psi^{*}"]\\
H^{k}(T^{k};\Z_{p})\otimes H^{1}(S^{1};\Z_{p}) \arrow[r, "\times"]& H^{k+1}(T^{k+1};\Z_{p}) \\
\end{tikzcd}
$$
Indeed,
the horizontal maps are isomorphism, by the Kunneth Theorem. Thus the Kunneth Formula isomorphism takes the tensor product to the cross product, $a\otimes b \overset{\times}{\rightarrow} a\times b.$
Here the cross product is defined as $a\times b=p^{*}_{1}(a)\cup p^{*}_{2}(b)$ where $p_{1}$ and $p_{2}$ are the projections of the product $X\times Y$ onto $X$and $Y$ respectively. Using the naturality of the cup product and the induction assumption, we obtain:
$$\psi^{*}(a\times b)=\psi^{*}(p^{*}_{1}(a)\cup p^{*}_{2}(b))=(\psi^{*}\degree p^{*}_{1})(a)\cup(\psi^{*}\degree p^{*}_{2})(b)=\psi^{*}(a)\otimes\psi^{*}(b)\neq 0$$
Hence, $\cd(\psi)=k+1.$ 
\end{proof}

We use the notation $T(A)$ for the torsion subgroup of an abelian group $A$.
For finitely generated abelian groups we define the rank $rank(A)=rank(A/T(A))$.

\begin{lemm}\label{split}
Every epimorphism $\phi:\Z^n\to\Z^m\oplus \Lambda$ splits as the direct sum $$\phi=\psi_1\oplus\psi_2:\Z^m\oplus\Z^{n-m}\to\Z^m\oplus \Lambda.$$
\end{lemm}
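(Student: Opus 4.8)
The plan is to exploit the freeness of the domain together with the projection of the target onto its free part. Write $p_1:\Z^m\oplus\Lambda\to\Z^m$ and $p_2:\Z^m\oplus\Lambda\to\Lambda$ for the two coordinate projections. Since $\phi$ is onto and $p_1$ is onto, the composite $p_1\phi:\Z^n\to\Z^m$ is an epimorphism onto a free group; hence its kernel $K=\ker(p_1\phi)$ is a free abelian subgroup of $\Z^n$ of rank $n-m$, and the short exact sequence $0\to K\to\Z^n\xrightarrow{p_1\phi}\Z^m\to 0$ splits. Choosing a splitting gives a free subgroup $S\cong\Z^m$ with $\Z^n=S\oplus K$ on which $p_1\phi$ restricts to an isomorphism onto $\Z^m$.

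First I would record what $\phi$ does on each summand. For $x\in K$ we have $p_1\phi(x)=0$, so $\phi(x)$ lies in $\{0\}\oplus\Lambda$; thus $\phi(K)\subseteq\Lambda$. Moreover $\phi|_K:K\to\Lambda$ is onto: given $\lambda\in\Lambda$, surjectivity of $\phi$ produces $y$ with $\phi(y)=(0,\lambda)$, and then $p_1\phi(y)=0$ forces $y\in K$. Setting $\psi_2=\phi|_K:\Z^{n-m}\to\Lambda$ gives the desired epimorphism on the second summand. In the decomposition $\Z^n=S\oplus K$ the map $\phi$ therefore has the block form $\phi(s,k)=(a(s),\,b(s)+c(k))$, where $a=p_1\phi|_S$ is an isomorphism onto $\Z^m$, $c=\psi_2$ is onto $\Lambda$, and $b=p_2\phi|_S:S\to\Lambda$ is the off-diagonal term obstructing a genuine direct-sum splitting.

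The main point is to remove this off-diagonal term $b$ by replacing $S$ with a better complement of $K$. Because $S$ is free and $c:K\to\Lambda$ is surjective, I can lift $-b$ through $c$: choosing for each basis vector $e_i$ of $S$ an element $k_i\in K$ with $c(k_i)=-b(e_i)$ defines a homomorphism $\sigma:S\to K$ with $c\sigma=-b$. Let $S'=\{\,s+\sigma(s)\mid s\in S\,\}$ be the graph of $\sigma$. Then $S'\cong S\cong\Z^m$, and $\Z^n=S'\oplus K$: the intersection is trivial since $s+\sigma(s)\in K$ forces $s\in S\cap K=0$, and the sum is all of $\Z^n$ because $s+k=(s+\sigma(s))+(k-\sigma(s))$. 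For $s'=s+\sigma(s)\in S'$ one computes $\phi(s')=(a(s),\,b(s)+c\sigma(s))=(a(s),0)$, so $\phi(S')\subseteq\Z^m$.

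Thus in the decomposition $\Z^n=S'\oplus K\cong\Z^m\oplus\Z^{n-m}$ the homomorphism $\phi$ is block diagonal: $\psi_1=\phi|_{S'}:\Z^m\to\Z^m$ equals $a$ under the isomorphism $S'\cong S$ and is therefore an isomorphism, while $\psi_2=\phi|_K:\Z^{n-m}\to\Lambda$ is the epimorphism found above. Hence $\phi=\psi_1\oplus\psi_2$, as claimed. The only delicate step is the correction of the complement $S$ to $S'$; it works precisely because $S$ is projective (free), so the surjection $c$ can be pulled back along $b$.
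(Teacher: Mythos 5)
Your proof is correct, and it actually does more than the paper's own argument. Both proofs begin the same way: write $\phi=(\phi_1,\phi_2)$ with $\phi_1=p_1\phi$ and $\phi_2=p_2\phi$, choose a section $s:\Z^m\to\Z^n$ of the epimorphism $\phi_1$ (possible because $\Z^m$ is free), and split $\Z^n=s(\Z^m)\oplus\ker\phi_1$, observing that $\phi$ carries $\ker\phi_1$ onto $\{0\}\oplus\Lambda$. The paper stops there: it sets $\psi_1=\phi_1|_{s(\Z^m)}$ and $\psi_2=\phi_2|_{\ker\phi_1}$ and declares $\phi=\psi_1\oplus\psi_2$. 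But with respect to that decomposition $\phi$ is in general only block \emph{triangular}, not block diagonal: for $x=s(z)+k$ one has $\phi(x)=(z,\;\phi_2(s(z))+\phi_2(k))$, which equals $(\psi_1\oplus\psi_2)(x)=(z,\phi_2(k))$ only when $\phi_2\circ s=0$, a condition an arbitrary section need not satisfy. Your second step --- lifting the off-diagonal map $b=p_2\phi|_S$ through the surjection $c=\phi|_K:K\to\Lambda$ (using freeness of $S$) and replacing $S$ by the graph $S'=\{s+\sigma(s)\mid s\in S\}$ --- is exactly what is needed to kill this off-diagonal term, and your verifications that $\Z^n=S'\oplus K$ and that $\phi(S')\subseteq\Z^m\oplus 0$ are complete. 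So your argument supplies a correction that the paper's own proof needs rather than merely reproducing it; it also makes explicit the additional fact, used later in the proof of Theorem \ref{Abelian}, that $\psi_1$ is an isomorphism of $\Z^m$.
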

\begin{proof}
The epimorphism $\phi$ as a map to the product is defined by coordinate functions,
$\phi=(\phi_1,\phi_2)$ which are also epimorphisms.
There exists a section $s:\Z^{m}\to\Z^{n}$ of the epimorphism $\phi_1$, since $\Z^{m}$ is free abelian. 
We show that $\Z^{n}$ splits as the direct sum $s(\Z^{m})\oplus\phi^{-1}(\Lambda).$ 
For each element $x\in \Z^{n}$ we have $\phi_1(x-s\phi_1(x))=\phi_1(x)-\phi_1s(\phi_1(x))=0.$ Therefore,
$x-s\phi_1(x)\in \phi^{-1}(\Lambda)$. Thus, every element $x\in \Z^{n}$
can be written as $s\phi_1(x)+(x-s\phi_1(x))$.
Suppose that $y\in s(\Z^{m})\cap\phi^{-1}(\Lambda).$ Since  $\phi(y)\in G$,
we obtain $\phi_1(y)=0$. Since $y\in s(\Z^m)$, we have $s\phi_1(y)=y$. Hence $y=0$
and the sum $s(\Z^m)+\phi^{-1}(\Lambda)=\Z^n$ is a direct sum.

Note that $s(\Z^m)\cong \Z^m$. In view of the splitting $\Z^n=\Z^m\oplus \phi^{-1}(\Lambda)$ it follows that $rank(\phi^{-1}(\Lambda))=n-m$.
 We define $\psi_{1}$ and $\psi_{2}$ to be the restrictions of $\phi_1$ and $\phi_2$
 to corresponding direct summands. i.e.,  $\psi_{1}:=\phi_1|_{s(\Z^{m})}$ and $\psi_{2}:=\phi_2|_{\phi^{-1}(\Lambda)}.$  
\end{proof}

\begin{thm}{\label{Abelian}}
Let $\phi:\Gamma\to\Lambda$ be an epimorphism of finitely generated abelian groups with $\phi(T(\Gamma))=0$. Then $$\cat(\phi)=\cd(\phi).$$ 

In particular, 
$$\cat(\phi)=\cd(\phi)=rank(\Lambda)+k(T(\Lambda)),$$ where $k(T(\Lambda))$ is the Smith normal number of $T(\Lambda)$.
\end{thm}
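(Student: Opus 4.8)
The plan is to reduce to the case of a torsion-free domain, and then to exploit the splitting of Lemma~\ref{split} together with the additivity of $\cd$ and $\cat$ under products. First I would use the hypothesis $\phi(T(\Gamma))=0$ to factor $\phi$. Writing $\Gamma=\Z^a\oplus T(\Gamma)$, the condition means that $\phi$ factors as $\phi=\bar\phi\circ q$, where $q:\Gamma\to\Gamma/T(\Gamma)\cong\Z^a$ is the quotient and $\bar\phi:\Z^a\to\Lambda$ is again an epimorphism. The inclusion $\Z^a\hookrightarrow\Gamma$ is a section of $q$, so $Bq:B\Gamma\to T^a$ is a retraction and $q^*$ is injective on cohomology with any coefficients pulled back along $q$. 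Since every $\Lambda$-module $M$ becomes a $\Gamma$-module via $\phi=\bar\phi q$, hence is pulled back through $q$, and since $\phi^*=q^*\bar\phi^*$, injectivity of $q^*$ yields $\cd(\phi)=\cd(\bar\phi)$. On the category side, the composition property of $\cat$ used in the proof of Lemma~\ref{Smith} gives $\cat(\phi)=\cat(\bar\phi q)\le\cat(\bar\phi)$. Consequently, once $\cat(\bar\phi)=\cd(\bar\phi)$ is established, the chain $\cat(\phi)\le\cat(\bar\phi)=\cd(\bar\phi)=\cd(\phi)\le\cat(\phi)$ closes up and forces all four quantities to coincide; thus it suffices to treat the epimorphism $\bar\phi:\Z^a\to\Lambda$.

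Next I would analyze $\bar\phi$ using Lemma~\ref{split}. Writing $\Lambda=\Z^m\oplus T(\Lambda)$ with $m=\rank(\Lambda)$, Lemma~\ref{split} splits $\bar\phi$ as a direct sum $\psi_1\oplus\psi_2:\Z^m\oplus\Z^{a-m}\to\Z^m\oplus T(\Lambda)$, inducing $B\bar\phi=B\psi_1\times B\psi_2$ on classifying spaces. Here $\psi_1:\Z^m\to\Z^m$ is an epimorphism of free abelian groups of equal rank, hence an isomorphism, so $\cat(\psi_1)=\cd(\psi_1)=m$; and $\psi_2:\Z^{a-m}\to T(\Lambda)$ is an epimorphism onto a finite abelian group, so Lemma~\ref{Smith} gives $\cat(\psi_2)=\cd(\psi_2)=k(T(\Lambda))$.

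Finally I would combine the two factors. For the upper bound, the product inequality for the LS-category of maps \cite{CLOT} yields $\cat(\bar\phi)=\cat(\psi_1\times\psi_2)\le\cat(\psi_1)+\cat(\psi_2)=m+k(T(\Lambda))$. For the matching lower bound on $\cd$, I would fix a prime $p$ dividing the first invariant factor of $T(\Lambda)$ and work with the trivial field coefficients $\Z_p$, exactly as in the proof of Lemma~\ref{Smith}. Choosing classes $a\in H^m(T^m;\Z_p)$ and $b\in H^{k(T(\Lambda))}(BT(\Lambda);\Z_p)$ whose pullbacks under $\psi_1^*$ and $\psi_2^*$ are nonzero, the Kunneth isomorphism (Theorem~\ref{KFT}) and naturality of the cross product give $(\psi_1\times\psi_2)^*(a\times b)=\psi_1^*(a)\times\psi_2^*(b)\ne 0$, whence $\cd(\bar\phi)\ge m+k(T(\Lambda))$. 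Together with $\cd\le\cat$ this pins down $\cat(\bar\phi)=\cd(\bar\phi)=m+k(T(\Lambda))=\rank(\Lambda)+k(T(\Lambda))$, and by the reduction of the first paragraph the same value equals $\cat(\phi)=\cd(\phi)$.

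I expect the main obstacle to be the $\cd$ lower bound: one must guarantee that the cross-product class survives, which forces a passage to field coefficients $\Z_p$ and a check, as in Lemma~\ref{Smith}, that both restricted classes remain simultaneously nonzero for a single prime $p$ dividing every invariant factor of $T(\Lambda)$. By contrast, the category upper bound is routine once the product inequality is invoked, and the reduction step is straightforward given the retraction $Bq$.
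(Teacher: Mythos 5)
Your proposal is correct and follows essentially the same route as the paper's proof: reduce to $\Gamma=\Z^n$ via the quotient by $T(\Gamma)$ and the resulting retraction on classifying spaces, split $\bar\phi=\psi_1\oplus\psi_2$ using Lemma~\ref{split}, obtain the upper bound from the product inequality for $\cat$ together with Lemma~\ref{Smith}, and obtain the lower bound from the Kunneth/cross-product argument with $\Z_p$ coefficients for a prime $p$ dividing the first invariant factor. The only immaterial difference is that the paper treats the torsion-free case $T(\Lambda)=0$ separately by citing Scott's result, whereas your argument handles it as a degenerate case (taking $b=1\in H^0$), and your chain of inequalities cleanly avoids having to prove $\cat(\phi)=\cat(\bar\phi)$ directly.
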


\begin{proof}
Since the groups $\Gamma$ and $\Lambda$ are finitely generated abelian groups, we may assume $\Gamma=\Z^{n}\oplus T(\Gamma)$ and $\Lambda=\Z^{m}\oplus T(\Lambda)$ for some $m$ and $n$. 

If both groups $\Gamma,\Lambda$ have torsion subgroups
and $\phi(T(\Gamma))\ne 0$, then Question \ref{question} does not hold as Dranishnikov's example \ref{Example} shows.

Thus, we consider $\phi$ with $\phi(T(\Gamma))=0$.
Such $\phi$ factors through the epimorphism $\bar\phi:\Z^n\to\Lambda$. 
In view of the retraction $B\Gamma\to B\Z^n$ we obtain that
$\cd(\bar\phi)=\cd(\phi)$ and $\cat(\bar\phi)=\cat(\phi)$. Thus, we may assume that
$\Gamma=\Z^n$.

If $\Lambda$ has no torsion, then  $\cat(\phi)=\cd(\phi)=rank(\phi)=rank(\Lambda)=m$
by Jamie Scott's result ~\cite{Sc}.
We consider the case $T(\Lambda)\ne 0$.   
Let $\phi:\Z^{n}\to\Z^{m}\oplus T(\Lambda)$ be an epimorphism. 
By Lemma~\ref{split} $\phi$ breaks into the direct sum $\psi_{1}\oplus\psi_{2}$ where $\psi_{1}:\Z^{m}\to\Z^m$ is an isomorphism and an epimorphism $\psi_{2}: \Z^{n-m}\to T(\Lambda).$  

By the well-known inequality for LS category of product of maps in ~\cite{CLOT}, we obtain $$\cat(\psi_{1}\oplus\psi_{2})\leq \cat(\psi_{1})+\cat(\psi_{2})=m+k,$$ where $k=k(T(\Lambda))$, since by Lemma ~\ref{Smith}
$\cat(\psi_{2})=\cd(\psi_{2})=k(T(\Lambda))$. 

To complete the proof, we show that $m+k\le \cd(\phi)$. By Theorem ~\ref{IDF}, the torsion group $T({\Lambda})$ admits a decomposition $T(\Lambda)=\Z_{n_{1}}\times..\times \Z_{n_{k}}$ where $n_{1},\cdots,n_{k}$ are natural numbers with $n_{1}|\cdots|n_{k}.$  The proof of Lemma \ref{Smith}
gives a nonzero homomorphism $$\psi_2^*:H^k(BT(\Lambda);\Z_p)\to H^k(T^{n-m};\Z_p).$$ We apply the Kunneth Formula (Theorem ~\ref{KFT}) with $\Z_{p}$ coefficients for $p|n_{1}$ to obtain a nonzero homomorphism  
$$\phi^{*}:H^{m+k}( T^m\times BT(\Lambda);\Z_p)\to H^{m+k}(T^m\times T^{n-m};\Z_p).$$ 
Therefore, $\cd(\phi)\ge m+k$.
This proves the theorem.
\end{proof}

\section*{Acknowledgments}
I would like to thank my advisor, Alexander Dranishnikov, for all of his help and encouragement throughout this project. I am grateful to two anonymous referees in Algebra and Discrete Mathematics for invaluable comments that greatly improved the quality of the manuscript.

\footnotesize

\end{document}